\theoremstyle{plain}
\newtheorem{theorem}{Theorem}[section]
\newtheorem{cor}[theorem]{Corollary}
\theoremstyle{definition}
\newtheorem{definition}[theorem]{Definition}
\newtheorem{example}[theorem]{Example}
\newtheorem{remark}[theorem]{Remark}
\numberwithin{equation}{section}\setcounter{page}{1}
\def\m{\mbox}
\def\D{\displaystyle}
\def\ma{\m{ and }}
\def\mo{\m{ or }}
\def\mfa{\m{ for all }}
\def\mf{\m{ for }}
\def\ii{\m{ if and only if }}
\def\o{\omega}
\def\O{\Omega}
\def\D{\mathcal{D}}
\def\h{\mathcal{H}}
\def\T {\mathcal{T}}
\def\E {\mathcal{E}}
\def\L {\mathcal{L}}
\def\M {\mathcal{M}}
\def\F {\mathcal{F}}
\def\s{\mathcal{S}}
\def\A{\mathcal{A}}
\def\W{\mathcal{W}}
\def\C{\mathcal{C}}
\def\U{\mathcal{U}}
\def\lni{\lim\limits_{n\to\infty}}
\begin{document}

\thispagestyle{empty} \setcounter{page}{1}

\begin{title}[ II Limit $W^*$ Algebra $M^{\Omega}$ ]{\mbox{}\\
[1cm]Contractions, Cogenerators, and Weak Stability}
\author{Robert O'Brien}
\end{title}

\vspace{.2cm}

\maketitle \thispagestyle{empty}

\vspace{-.2cm}
\begin{center}
Department of Mathematics,\\ The Catholic University of America(Retired),\\
Washington, DC 20064, USA.\\
\end{center}

\noindent{\bf Keywords:} Hilbert Space, Unitary Group, Weak
Stability, Almost Weak Stability, Weakly - Wandering, Contraction, Semigroup. \noindent{\bf AMS (MOS) Subject
Classification.} 47A20, 47A35

\begin{abstract} A $C_{0}$ contraction semigroup $\mathcal{T}$ on a Hilbert space $\h$ and its cogenerator $T$ define a $W^{*} - algebra, ~\M^{\O}$ - the \emph{limit algebra} - which determines the structure of the subspace of \emph{weakly Poisson recurrent (wPr)} vectors and gives a necessary and sufficient condition for $\mathcal{T}$ and $T$ to be \emph{weakly stable equivalent}.
\end{abstract}

\section{Introduction and Summary}
\subsection{Introduction}
Let $\mathcal{T} =\{T_{t}:t \geq 0\}$ be a $C_{0}$ contraction semigroup on a Hilbert space $\h$, $T$ its cogenerator, $\mathcal{D} = \{T^{n}:0\leq n<+\infty\}$. $\T$ and $\D$ have a common space of \emph{flight}( or \emph{almost weakly stable}[2,2.22] ) vectors $\h_{0}$. Assume $\h = \h_{0}$.

A commutative contraction semigroup $\s$ splits $\h_{0}$.  $\h_{0}= \h_{m}(\s) \oplus \h_w(\s)$ [9, Theorem 2.5] where $\h_{m}(\s)$ is the space of \emph{weakly Poisson recurrent (wPr)} vectors and $\h_{w}(\s)$ is the \emph{weakly - stable} subspace. $\T$ and $\D$ define dynamical systems on $\h$.\footnote{Dynamical system - the action of a commutative contraction semigroup $\s$ on a Hilbert space $\h$.} We shall examine the interaction of the limit operators of $\T$ and $\D$  and the limit states of these systems. These limit operators generate a $W^{*} - algebra, ~\M^{\O}$ - the \emph{limit algebra} - which determines the structure of the subspace $\h_{m}$ and the interaction of the limit states. $\M^{\O}$ also determines a necessary and sufficient condition[Section 5] for equivalence of weak stability of $\mathcal{T}$ and $\D$ i.e., $\h_w(\T)= \h_w(\D)$ . Reference [2] motivated this research with the open question [2, 2.23].

\subsection{Preliminaries} We use the definitions and notation of [9, 10]. $\T \ma \mathcal{D}$ have a common unitary subspace $\h_{u}\subset \h_{0} $. $\h_{u}$ is closed and reducing and $ \h_{m} \subset \h_{u}$ for both $\T \ma \mathcal{D}$. $\U$ is the unitary group defined on $\h_{u}$ by $\T$ and $\C$  is the corresponding group on $\h_{u}$ for $\mathcal{D}$ [9, 10]. $\T \ma \mathcal{D}$ each split $\h_{0}$,  $\h_{m}(\T)=\h_{m}(\U)$ and $\h_{m}(\D)=\h_{m}(\C)$. $\T = \U P_{m_{\T}}\oplus \T P_{w_{\T}}$ and similarly for $\D$.

\subsection{Example: A Weakly Stable Equivalent $C_{0}$ Contraction Semigroup and Cogenerator} [12, 1.3].
Take the Hilbert space $\h$ to be $\h = L^{2}([0, 1] \oplus  l^{2}{\mathbb{Z}}^{+})$. [9, 10] define for a strictly increasing, continuous-singular function $F$ on $[0, 1]$ a spectral family $\{F_\theta:0\leq\theta< 1\}$ with unitary operator $U = \int\limits^{1}_{0} e^{2 \pi i \theta}dF_\theta$.  $U$ has purely continuous spectrum and hence $L^{2}[0, 1] = \h_{0}(U)$. [Jacobs-Glicksburg-Deleeuw Theorem [3]]. $U$ is the Cayley transform of the self-adjoint operator $A = \int\limits^{+\infty}_{-\infty} \lambda ~dE_\lambda, $ and the cogenerator of the $C_{0}$ unitary group $\U : U_{t}= \int\limits^{+\infty}_{-\infty} e^{i t \lambda} ~dE_\lambda, -\infty < t < + \infty$ on $L^{2}[0, 1]$. Since $\h_{0}$ is reducing, $ \h_{0}(\C) = \h_{0}(\U)$, and from [9, 10, 6.2], $\h_{m}(\C)= \h_{0}(\C)$. Similarly $\h_{m}(\U)= \h_{0}(\U)$. Hence $\U ~\ma~\C$ are \emph{weakly Poisson recurrent} and $L^{2}[0, 1] = \h_{m}(\U)=\h_{m}(\C)$.

Let $W\{z_{0}, z_{1},... \} = \{0, z_{0}, z_{1},... \}$ be the unilateral shift on ${l^{2}(\mathbb{Z}_{+})}$. $ W$ is an isometry and \emph{completely non-unitary(cnu)} [7] and hence \emph{weakly stable}. Since $1$ is not an eigenvalue of $W$ and $W$ is isometric,  $W$ is the cogenerator of a $C_{0}$ isometric semigroup $\W = \{W_{t}\}$. Since $W$ is \emph{cnu} $\W$ cannot have a closed invariant subspace on which each $W_{t}$ is unitary, hence it is also \emph{cnu} and therefore \emph{weakly-stable}. [7, 9]  [16, IX.9],[2, 3.2 ]

Define a $C_{0}$ contraction semigroup on $\h$ by $\T: T_{t} = U_{t} \oplus W_{t}~\mf t\geq 0$. The cogenerator of $\T$ is $T = U \oplus W$, $\D =\{T^{n}: n \geq 0 \}$ its semigroup. Then $\h_p(\mathcal{T}) = \h_p(\mathcal{D})=\{0\}$, i.e. $\h = \h_0(\mathcal{T})=\h_0(\mathcal{D})$ consists of flight vectors[9]. The equivalence of weak stability will be proved in [Example 5.6].

\section{Dynamical Systems on $\h_{0}$ }
The limit states of $\T ~\ma ~ \D$ are determined by their action on $\h_{u}$ - the unitary space - so we consider the groups  $\U ~\ma ~ \C$.
In the following $\s$ will denote one of the above: $\mathcal{T} ,~\mathcal{D},~ \U, ~\mo~ \C$ which will be apparent from the context. To be self contained we collect some basic notions from [9,10]. Definitions and results will be stated for $\s =\T$ or $ \U$, the extension to $\s = \D \mo \C$ will be clear. We examine the dynamics of $\s$ with a classical eye.
\subsection{Dynamical Systems}
The semigroup $\s$ defines a dynamical system on $\h$ :
\begin{equation}\label{eqn1}
  x(t)=  T_t x_{0},~ x(-t)= T^{*}_t x_{0} ~\mf~ t \geq 0, ~\mfa ~ x_{0} \in \h.
\end{equation}
For (2.1) $\s$ has $\omega ~\ma~\alpha$ - \emph{limit operators}, $\Omega = \{V : V = \o- \lim  T_{t_{k}} ,~  t_{k}\uparrow + \infty \}$ and $\mathcal{A} = \Omega^{*}$. These operators define the $\omega$ and $\alpha$ -  \emph{limit states} of $x_{0}$, $ \Omega(x_{0})=\{y :y=\o-\lim  T_{t_{k}} x_{0},~ t_{k} \uparrow + \infty \} = \Omega \cdot x_{0}$ and  $ \mathcal{A}(x_{0})= \Omega^{*} \cdot x_{0}$. [8, 9, 10]. These sets are $\s, \s^{*}$ invariant.[9, 10]. The system (2.1) also has limit cycles as in the finite dimensional case of the Poincare- Bendixson theorem,
\begin{equation}\label{eqn2}
  y(t,x_{0}) = T_{t} V x_{0},~ y(-t,x_{0})= T^{*}_{t}V^{*} x_{0} ~\mf~ t \geq 0~\ma~ V \in \O.
\end{equation} $$$$
The trajectories of (2.2) converge weakly and pointwise to the limit cycles, i.e. given $x_{0}$ and $V = \o- \lim  T_{t_{k}}$ then
\begin{equation}\label{eqn3}
  \o -\lni x(s+t_{n},x_{0})= y(s,x_{0}).
\end{equation}

\subsection{Subspaces and Recurrence}
For $x \in \h_{0}$, $M(x,\s)=\overline{sp~\Omega}(x,S)$ is the \textit{limit subspace} for $x$. Note that $\O \cdot x =\Omega(x,\s) =\cap \{ \overline{\s(T_{s}x): s\geq 0}\}$ and ${T_{t}}^{*} \Omega(x)=\Omega(x)$ for all $t \geq 0$.[9, 10]

$\h_w(\s)$ is the collection of \textit{weakly stable states} - $\o-\lim\limits_{s\rightarrow \infty} T_s x = 0$. They form a a closed reducing subspace $\h_w(\s) = \bigcap \{ ker V : V \in \Omega_{\s}\}=\bigcap \{ ker V^{*} : V \in \Omega_{\s}\}=\h_w(\s^{*})$ [9, 10].

 A vector x in $\h_{0}$ is \emph{Poisson recurrent}(\emph{Pr}) if and only if  $x \in \Omega(x, \s)= \O \cdot x$, i.e. $x = Vx$ for some $V \in \Omega$. If $x$ is \emph{Pr} then $x \in \h_{u}$ and $\o -lim~ T_{t_{n}}x = x ~\m{implies}~ s-lim ~T_{t_{n}}x = x$.

A vector x in $\h_{0}$ is \emph{ weakly Poisson recurrent (wPr)} $\ii ~ x \in M(x, \s)$. $\h_m$ is the collection of \emph{wPr} vectors. Note that $x \in \h_{0}$ is (\emph{wPr}) if there is a net for this $x$ in $\overline{sp}~ \s$, $\{(\sum\limits^{n}_{k=1}a_{k}T_{s_k} )_{\alpha}  ~: \alpha \in \Delta \}$ with $\o-lim_{\Delta} (\sum\limits^{n}_{k=1}a_{k}T_{s_k} )_{\alpha}x = x$. Since $\h_{m}= \h_{0}\ominus \ \h_{w}$, $\h_m$ is a closed and reducing subspace and $\h_{m}(\s) =  \h_{m}(\s^{*})$. If $x$ is \emph{wPr} then $M(x, \s)$ is reducing and $M(x,\s)=\overline{sp~\Omega \cdot x}=\overline{sp~\Omega^{*} \cdot x} = M(x,\s^{*})$ [9, 3.4 -5], i.e. the future of a \emph{wPr} vector coincides with its past.

An ortho-normal set $\{x_{\tau}: \tau \in \Pi\}\subset \h_{m}(\s) $ is a \emph{recurrent spanning set} [9, 10] for $\s$ if $\h_{m}(\s) = \bigoplus_{\Pi} M_{\tau} ~ \mf ~ M_{\tau}= M(x_{\tau}, \s)$. From [9, Theorem 2.5] if $\h_{m}(\s) \neq 0$ then $\s$ has an ortho-normal  \emph{recurrent spanning set} $\{x_{\tau}: \tau \in \Pi\}\subset \h_{m}(\s) $.

\section{Limit Algebras}
Prompted by (2.1 - 2.2) the composition of the limit-cycles and the structure of the space $\h_{m}$ we consider the limit operators $\O_{\s}$ of $\s=\U $ or $ \C$ and the algebra they generate in $\L(\h_{u})$.
\subsection{Algebras}
For any commutative collection of operators $\mathcal{A} \subset L(\h_{u})$, let $\M^{\A}$ be the least *-closed, weakly closed sub-algebra of $L(\h_{u})$ containing $\mathcal{A}$. $\mathcal{A}$ is the generating set of $\M^{\A}$ and $P_{\A}$ is its unit.[14, 1.7],

In particular, for $\mathcal{A} = \O_{\s} ,~ \M^{\Omega_{\s}}$ is the $\emph{limit algebra}$ generated by the limit operators $\Omega_{\s}$ of $\s$. Note that for all $x \in \h, V \in \Omega_{\s}, Vx = Vx_{m}\in \h_{m} $ and hence $\M^{\Omega_{\s}}$ is a subalgebra of $L(\h_{m})$.

\subsection{The Generator Sets $\A$}

  On the unitary space $\h_{u}$ of $\U \ma \C$ let $\E = \{E_\lambda:-\infty<\lambda<+\infty\}$ and $\F = \{F_\theta:0\leq\theta< 1\}$ be their respective spectral families with $A$ the self-adjoint operator generating the unitary group $\U$. Note that $\mathcal{E} \ma \mathcal{F}$ satisfy $E_\lambda = F_{-2arcot(\lambda)}, -\infty<\lambda<+\infty$ [13, $\S$ 121] and hence $ \mathcal{E} = \mathcal{F} $.

   The spaces of Borel functions essentially bounded $\E,~ \F$ a.e., $\L^{\infty}_{\E}, \L^{\infty}_{\F}$ have corresponding algebras[13, $\S$ 109,129]
   $$\mathcal{L}_{\mathcal{E}} = \{u(A)=\int\limits^{+\infty}_{-\infty}u(\lambda)dE_\lambda: u \in \L^{\infty}_{\E}\},$$ and
   $$ \mathcal{L}_{\mathcal{F}}= \{w(U) =\int\limits^{1}_{0}w(e^{2 \pi i \theta})dF_\theta:w \in \L^{\infty}_{\F}\}. $$

  Since $ \mathcal{E} = \mathcal{F} $ we have $\overline{\mathcal{L}}_{\mathcal{E}}= \overline{sp}~\mathcal{E}=\overline{sp}~\mathcal{F} =  \overline{\mathcal{L}}_{\mathcal{F}}$ in $\L(\h_{u})$, and if $\h_{u}$ separable [13, $\S$ 129] $$\M^\mathcal{E} = \overline{\mathcal{L}}_{\mathcal{E}} = {\mathcal{L}}_{\mathcal{E}} = {\mathcal{L}}_{\mathcal{F}} = \overline{{\mathcal{L}}}_{\mathcal{F}}= \M^\mathcal{F}$$ with unit $P_{u}$ the orthogonal projection on $\h_{u}$.

The algebras  defining the dynamics of $\s$ have the following generating sets :
$ \mathcal{E}=\mathcal{F} $, the groups $\mathcal{U}$ and $\mathcal{C}$, and the sets of limit operators and closures - $\Omega_{\C}$, $\Omega_{\U}$, $\overline{\Omega_{\U}}$, $\overline{\Omega_{\C}}$.

$P_{u}$ is the unit of $\M^{\C}~\ma~\M^{\U}$ while the identities for $\h_{m}(\C) \ma \h_{m}(\U)$ are the units $P_{m_{c}} \in \M^{\O_{\C}}$ and $P_{m_{u}} \in \M^{\O_{\U}}$ respectively.

\subsection{Limit Algebras and Limit Spaces}
For all $x \in \h$, $M(x,\s) = M(x_{m},\s)$ and from (2.2) $M(x,\s)$ is $\s^{*}$ invariant, $\h_{m}(\s)=\h_{m}(\s^{*})$, and $P_{m}(\s)=P_{m}(\s^{*})$ [9, 3.4 - 5].

\begin{theorem} For the groups $\s = \U ~ \ma ~ \C$, and $ x \in \h$, then $$a)~M(x,\s)=\overline{sp}~\Omega_{\s} \cdot x=\M^{\O_{\s}}\cdot x = \M ^{\overline{\O_{\s}}}\cdot x = \overline{sp}~\overline{\O}_{\s} \cdot x = \mathcal{M}^{\s} P_{m_{\s}} \cdot x,$$ and $$b)~\M^{\overline{\O_{\s}}} = \M^{\O_{\s}} =\mathcal{M}^{\s} P_{m_{\s}}.$$ The unit of $\M^{\O_{\s}}$ is $P_{m_{\s}}$ the orthogonal projection onto $\h_{m}(\s)$.
\end{theorem}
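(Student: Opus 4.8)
The plan is to prove the algebraic identity (b) first and then obtain (a) by evaluating the algebras at $x$, using $M(x,\s)=M(x_m,\s)$ and the reversibility $\sp~\O_{\s}\cdot x=\sp~\O_{\s}^{*}\cdot x$ of \emph{wPr} vectors [9, 3.4--5]. Two of the displayed equalities are purely formal: since $\M^{\O_{\s}}$ is weakly closed and contains $\O_{\s}$, it contains the weak closure $\overline{\O_{\s}}$, whence $\M^{\overline{\O_{\s}}}=\M^{\O_{\s}}$; and $\sp~\overline{\O_{\s}}=\sp~\O_{\s}$ because closing the span of a set and of its closure produce the same subspace. So the real content is the single identity $\M^{\O_{\s}}=\M^{\s}P_{m_{\s}}$ together with the assertion that $P_{m_{\s}}$ is the unit.

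The first key step is a \emph{semigroup absorption} property. Because $\s=\U$ (and likewise $\C$) is a group with $U_sU_t=U_{s+t}$, and multiplication is separately weakly continuous, left or right multiplication by a fixed group element maps $\O_{\s}$ into itself: if $V=\o\text{-}\lim U_{t_k}$ then $U_sV=\o\text{-}\lim U_{s+t_k}\in\O_{\s}$ since $s+t_k\uparrow+\infty$, and similarly $VU_s\in\O_{\s}$. For a product of limit operators $V=\o\text{-}\lim U_{s_j}$ and $V'=\o\text{-}\lim U_{t_k}$ I then get $VV'=\o\text{-}\lim_j U_{s_j}V'$ with each $U_{s_j}V'=\o\text{-}\lim_k U_{s_j+t_k}\in\O_{\s}$, so $VV'\in\overline{\O_{\s}}$; using $U_t^{*}=U_{-t}$, the analogous computations place $V^{*}V'$, $VV'^{*}$, $V^{*}V'^{*}$ in $\overline{\O_{\s}}$ or $\overline{\O_{\s}^{*}}$. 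Hence $\sp(\O_{\s}\cup\O_{\s}^{*})$ is already a weakly closed $*$-algebra, so $\M^{\O_{\s}}=\sp(\O_{\s}\cup\O_{\s}^{*})$, and the same absorption gives $U_s\M^{\O_{\s}}\subseteq\M^{\O_{\s}}$ for every $s$.

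Next I identify the unit and close (b). Each $V\in\O_{\s}$ annihilates $\h_w(\s)=\bigcap\{\ker V:V\in\O_{\s}\}$ and has range in $\h_m$ (because $Vx=Vx_m$), so $V=P_{m_{\s}}VP_{m_{\s}}$; as $V$ is a weak limit of $U_{t_k}\in\M^{\s}$ this yields $\M^{\O_{\s}}\subseteq\M^{\s}P_{m_{\s}}$. Conversely, the unit $P$ of the von Neumann algebra $\M^{\O_{\s}}$ satisfies $P\leq P_{m_{\s}}$, while for every \emph{wPr} vector $x\in\h_m$ one has $x\in M(x,\s)=\sp~\O_{\s}\cdot x\subseteq\operatorname{ran}P$, forcing $P\geq P_{m_{\s}}$ and hence $P=P_{m_{\s}}$. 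This proves the unit claim and, decisively, gives $P_{m_{\s}}\in\M^{\O_{\s}}$. Combining this with the absorption property, $U_sP_{m_{\s}}=U_s\cdot P_{m_{\s}}\in\M^{\O_{\s}}$ for all $s$; since $\M^{\s}=\sp\{U_s\}$ and $P_{m_{\s}}$ commutes with $\M^{\s}$ (as $\h_m$ reduces $\s$), the compression $\M^{\s}P_{m_{\s}}=\sp\{U_sP_{m_{\s}}\}$ lies in $\M^{\O_{\s}}$, giving the reverse inclusion and therefore (b).

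Finally (a) follows by evaluating at $x$. The first equality $M(x,\s)=\sp~\O_{\s}\cdot x$ is the definition of $M(x,\s)$. For $T\in\M^{\O_{\s}}=\sp(\O_{\s}\cup\O_{\s}^{*})$, the reversibility $\O_{\s}^{*}\cdot x\subseteq\sp~\O_{\s}\cdot x=M(x,\s)$ and the weak closedness of the subspace $M(x,\s)$ give $Tx\in M(x,\s)$, while the reverse containment is immediate, so $\M^{\O_{\s}}\cdot x=M(x,\s)$; the remaining three expressions then equal $M(x,\s)$ by the formal reductions of the first paragraph and by (b). I expect the main obstacle to be the multiplicative closure of the limit operators — showing that products of \emph{weak} limits remain in $\overline{\O_{\s}}$ — which is precisely where the group law $U_sU_{t_k}=U_{s+t_k}$ and the separate weak continuity of multiplication carry the argument; the adjoint closure needed to pass from $\sp~\O_{\s}\cdot x$ to $\M^{\O_{\s}}\cdot x$ rests on the reversibility of \emph{wPr} vectors from [9, 3.4--5].
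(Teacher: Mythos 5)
Your argument is correct, but it reaches the theorem by a genuinely different route than the paper. The paper proves a) first, essentially by citation of [9, 3.4--5] plus the containment chain $\M^{\O_{\s}}\cdot x \subset \M^{\s}P_{m}\cdot x \subset M(x,\s)$ (using $T_{t}P_{m}x,\ T_{t}^{*}P_{m}x \in M(x,\s)$), and then obtains b) by a strong-operator approximation: for $T\in\M^{\s}$ and each $x$, it approximates $TP_{m}x$ by $\sum a_{k}V_{k}x$ with $\sum a_{k}V_{k}\in sp~\O_{\s}$, and invokes strong-operator closedness of the $W^{*}$-algebra to conclude $TP_{m}\in\M^{\O_{\s}}$. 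You reverse the order and replace the approximation step by a structural one: the group law plus separate WOT-continuity of multiplication shows $\O_{\s}$ is absorbed under multiplication by group elements and that pairwise products of limit operators land in $\overline{\O_{\s}}\cup\overline{\O_{\s}^{*}}$, so $\sp(\O_{\s}\cup\O_{\s}^{*})$ is already the $W^{*}$-algebra $\M^{\O_{\s}}$; you then pin down the unit by the two-sided comparison $P\leq P_{m_{\s}}$ (ranges of limit operators lie in $\h_{m}$) and $P\geq P_{m_{\s}}$ (every \emph{wPr} vector lies in $\operatorname{ran}P$), and deduce $\M^{\s}P_{m_{\s}}\subseteq\M^{\O_{\s}}$ from $P_{m_{\s}}\in\M^{\O_{\s}}$ and absorption. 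What your route buys is an explicit description of $\M^{\O_{\s}}$ as $\sp(\O_{\s}\cup\O_{\s}^{*})$ and a proof of the inclusion $\M^{\s}P_{m_{\s}}\subseteq\M^{\O_{\s}}$ that exhibits $TP_{m_{\s}}$ as an honest WOT limit of elements of the algebra, thereby sidestepping the delicate point in the paper's version that approximating $TP_{m}x$ one vector at a time only places $TP_{m}$ in a pointwise (reflexive) closure rather than in the strong-operator closure, which requires simultaneous approximation on finite sets of vectors; the cost is that you must carry out the iterated-limit bookkeeping for products of weak limits, which is legitimate here only because multiplication is separately (not jointly) WOT-continuous and all operators involved are contractions. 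Both proofs rest on the same external inputs — $M(x,\s)=\sp~\O_{\s}\cdot x=\sp~\O_{\s}^{*}\cdot x$ for \emph{wPr} vectors and $M(x,\s)=M(x_{m},\s)$ from [9, 3.4--5] — so neither is more elementary, but yours is the more self-contained of the two.
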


\begin{proof}
  Statement a) follows from the above remarks, [Section 2], and [9,10]. Note that if $P_{m}$ is the projection on $\h_{m}$, $T_{t}P_{m}x~\ma~ T_{t}^{*}P_{m}x \in M(x,\s)$ [9, 3.4-5] and therefore for all $x \in \h_{0}, ~\M^{\O_{\s}}\cdot x \subset \mathcal{M}^{\s}\cdot P_{m}x \subset  M(x,\s)$ and hence $\M^{\Omega_{\s}} \cdot x =  \M^{\s}P_{m} \cdot x = M(x,\s)$.

For b), fix $x \in \h_{0}$ and $T \in \M^{\s}$. For this $x$ and any $\epsilon > 0$, since $M(x, \s)= \overline{sp~\Omega_{\s}\cdot x}$, there exists $\sum a_{k}V_{k} \in sp ~\Omega_{\s} \subset \M^{\Omega_{\s}}$ such that $\|\sum a_{k}V_{k}x - TP_{m}x\|< \epsilon$. Since $x \in \h_{0}~ \ma~\epsilon > 0$ were arbitrary $TP_{m} \in (\M^{\Omega_{\s}})^{a} = \M^{\Omega_{\s}} $ (the $W^{*}$ algebra $\M^{\Omega_{\s}}$ is strong-operator closed [14, \S  1.15.1]).
\end{proof}
\begin{theorem}For the subsets of $\L(\h_{u})$ of [Section 3.2], $\mathcal{C},~ \mathcal{U},~ \E,~ \F$,
$$\M^\mathcal{U}=\overline{\mathcal{L}_{\mathcal{E}}}=\M^\mathcal{E} = \M^\mathcal{F}=\overline{\mathcal{L}_{\mathcal{F}}} =\M^\mathcal{C},$$
is a $W^*$ subalgebra of $\L(\h_{u})$ with unit $P_{u}$ the identity of $\L(\h_{u})$.
\end{theorem}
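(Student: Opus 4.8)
The larger part of this chain is already established in Section 3.2, where the identification $\E=\F$ and the separability of $\h_u$ give $\M^\E=\overline{\L_\E}=\L_\E=\L_\F=\overline{\L_\F}=\M^\F$. Hence the statement reduces to the two von Neumann algebra identities $\M^\U=\M^\E$ and $\M^\C=\M^\F$; splicing these onto the Section 3.2 chain produces the displayed equalities. For the unit, note that $U_0=\int_{-\infty}^{+\infty}dE_\lambda=P_u$, so $P_u\in\U\subseteq\M^\U$ and $P_u$ acts as the identity of $\L(\h_u)$ throughout.

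My plan for $\M^\U=\M^\E$ is to pass to commutants. Since $\U=\{U_t:-\infty<t<+\infty\}$ is a self-adjoint family of unitaries containing $U_0=P_u$, and $\E=\{E_\lambda\}$ is a self-adjoint family of projections with $E_\lambda\to P_u$ strongly as $\lambda\to+\infty$, the double commutant theorem [14] identifies $\M^\U=\U''$ and $\M^\E=\E''$ inside $\L(\h_u)$. It therefore suffices to prove the single commutant identity $\U'=\E'$, since taking commutants once more then yields $\U''=\E''$.

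The inclusion $\E'\subseteq\U'$ is immediate: each $U_t=\int e^{it\lambda}\,dE_\lambda$ is a bounded Borel function of the generator $A$, so anything commuting with every $E_\lambda$ commutes with every $U_t$. The reverse inclusion $\U'\subseteq\E'$ is the crux. Given $B\in\U'=(\M^\U)'$, $B$ commutes with all of $\M^\U$; and for each $f\in L^1(\mathbb{R})$ the strongly convergent integral $\int f(t)U_t\,dt=\hat f(A)$ (with $\hat f(\lambda)=\int f(t)e^{it\lambda}\,dt$) is a strong limit of elements of $sp\,\U$, hence lies in $\M^\U$. Thus $B$ commutes with $\hat f(A)$ for every $f\in L^1$. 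Because $\{\hat f:f\in L^1\}$ is a self-adjoint, point-separating, nowhere-vanishing subalgebra of $C_0(\mathbb{R})$, it is sup-norm dense there, and the norm-contractivity of the continuous functional calculus forces $B$ to commute with $g(A)$ for all $g\in C_0(\mathbb{R})$; since the commutant of $B$ is strongly closed, a bounded-convergence argument ($g_n\to\mathbf 1_{(-\infty,\lambda]}$ boundedly and pointwise forces $g_n(A)\to E_\lambda$ strongly) shows $B$ commutes with each $E_\lambda$, i.e. $B\in\E'$. Recovering the spectral family of the generator from the group in this last step is the classical content of Stone's theorem, and is the place where I expect the real work to lie.

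The identity $\M^\C=\M^\F$ is the discrete-parameter analogue and is simpler. Here $\C=\{U^n:n\in\mathbb{Z}\}$ is the group generated by the single unitary cogenerator $U$, so $\M^\C=\{U\}''$; the spectral theorem for the unitary $U$ together with the double commutant theorem gives $\{U\}''=\F''=\M^\F$ exactly as above, the commutant identity $\{U\}'=\F'$ now following directly from the single-operator spectral calculus. Combining $\M^\U=\M^\E$, $\M^\C=\M^\F$, and the Section 3.2 equality $\M^\E=\M^\F$ closes the chain and exhibits $P_u$ as the common unit.
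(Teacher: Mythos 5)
Your argument is correct, but it takes a genuinely different route from the paper's. The paper proves the theorem by a cycle of containments $\M^\U \subseteq \M^\E = \M^\F \subseteq \M^\C \subseteq \M^\U$. The first two containments come, as in your opening paragraph, from $\U \subset \overline{sp}\,\E$, $\C \subset \overline{sp}\,\F$ and $\E = \F$. The two nontrivial ones are obtained constructively: $\M^\F \subseteq \M^\C$ because, in the von Neumann--Riesz--Nagy construction of the spectral theorem from the Cayley transform, each $F_\theta$ is a strong-operator limit of polynomials in $U$ and $U^*$, so $\F \subset \M^\C$; and $\M^\C \subseteq \M^\U$ because the cogenerator is rebuilt from the group via the resolvent formula $(I-iA)^{-1} = \int_0^{+\infty} e^{-t}U_t\,dt \in \M^\U$, a difference-quotient limit showing $A(I-iA)^{-1} \in \M^\U$, and finally $U = (iI-A)(iI+A)^{-1} \in \M^\U$. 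You instead reduce both equalities to commutant identities via the bicommutant theorem, recovering $\E$ from $\U$ by the Fourier-transform argument underlying Stone's theorem and recovering $\F$ from $U$ by the spectral calculus for a single unitary. Both routes are valid. The paper's buys explicit strong-operator approximations (polynomials in $U, U^*$ converging to the $F_\theta$, resolvents of the group producing $U$), which are reused later, e.g.\ in Example 5.6(b). Yours is conceptually shorter but outsources the analysis to the density of $\{\widehat f : f \in L^1(\mathbb{R})\}$ in $C_0(\mathbb{R})$ and to the bicommutant theorem; if you keep it, you should record explicitly that $\U$ and $\C$ are self-adjoint families acting non-degenerately on $\h_u$ (so that $\M^\U = \U''$ and $\M^\C = \C''$ really follow from the bicommutant theorem) and that $\{U\}' = \{U,U^*\}'$ for a unitary $U$ --- both are true and each is a one-line check.
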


\begin{proof}\

\begin{enumerate}

\item [1)]Since $ \U \subset \overline{sp}~\E = \overline{\L_{\E}}= \M^{\E}$, and $ \C \subset \overline{sp}~\F = \overline{\L_{\F}}= \M^{\F}$ then $\M^{\U} \subset \overline{\L _{\E }} = \M^{\E} = \M^{\mathcal{F}} = \overline{\L_{\F}} \supset \M^{\C}$  since $ \mathcal{F} = \mathcal{E}$, i.e. $$\M^{\U} \subset \overline{\L _{\E }} = \M^{\E}\ma \M^{\C}\subset \overline{\L_{\F}} =  M^\mathcal{F}. $$
\item [2)] To show : $\M^\mathcal{F}\subset \M^\mathcal{C}$:\\
Let $A$ be the self-adjoint generator of $\U$. As in the von Neumann construction of the spectral theorem for $A$ from the Cayley transform $U$ [13, $\S$ 109 and $\S$ 126], the unique spectral family $\mathcal{F}=\{F_\phi:0<\phi<1\}$ for $U$ is the strong operator limit of polynomials in $U \ma U^{*}$.  Hence $ \mathcal{F} \subset \M^\mathcal{C} $ and therefore $\M^\mathcal{F} = \overline{\L_{\F}}\subset\M^\mathcal{C}$.

\item [3)] $\M^\mathcal{C} \subset \M^\mathcal{U}$:\\
On the reducing subspace $\h_{u}$, $I = P_{u}$. Therefore as in [16, XI.4]  $(I - iA)^{-1} =\int\limits^{+\infty}_{0}e^{-t}U_t dt \in M^\mathcal{U}$. The closed operator $A( I - iA) ^{-1}$ has domain $\h_{u}$ and hence is bounded by the Closed Graph Theorem. Moreover for $x \in \h_{u}$,
\begin{equation}\label{eqn5}
 A(I - iA)^{-1}x = s-\lim\limits_{t \to 0}\frac{1}{t}( U_t - I)( I - iA)^{-1} x \in M^\mathcal{U}.
\end{equation}
Hence $(I+iA)( I - iA)^{-1}= (iI - A)(iI + A)^{-1} = U \in  M^\mathcal{U}$. Since $M^\mathcal{U}$ is *-closed, $U^*$ is also in $M^\mathcal{U}$ and hence $M^\mathcal{C} \subset M^\mathcal{U}$.

\item [4)]Combing the previous paragraphs\\
From 1) and 2): $\M^\mathcal{C} \subset \overline{\L_{\F}} = \M^\mathcal{F}\subset \M^\mathcal{C}$ and hence $\M^\mathcal{C} = \overline{\L_{\F}} = \M^\mathcal{F}$.\\
From 1) and 3): $\M^\mathcal{C} \subset \overline{\L_{\F}} = \M^\mathcal{F} \subset \M^\mathcal{C}$ implies
$$\M^\mathcal{C}=\M^\mathcal{F}=\overline{\L_{\F}} = \overline{\L_{\E}}= \M^\mathcal{E}\supset \M^\mathcal{U} \supset \M^\mathcal{C}.$$ Theorem 3.2  follows.
\end{enumerate}
\end{proof}

\begin{theorem}
For all $x, ~ M(x, \C)=\M^{\Omega_{\C}} \cdot x = \M^{\C}P_{m_{\C}} \cdot x$ is a separable Hilbert space and hence
$$M(x, \C)= M(x_{m}, \C)=\M^{\Omega_{\C}} \cdot x_{m}= \overline{\mathcal{L}_{\mathcal{E}}}\cdot x_{m} = \mathcal{L}_{\mathcal{E}}\cdot x_{m} = \mathcal{L}_{\mathcal{F}}\cdot x_{m}.$$
A similar statement holds for $\U$.
\end{theorem}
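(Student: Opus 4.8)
The plan is to extract the two displayed chains almost entirely from Theorems 3.1 and 3.2, reducing the genuinely new content to a single assertion: the separability of the cyclic subspace $M(x,\C)$, from which the removal of every operator-closure bar follows.

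First I would fix $x$, set $x_m=P_{m_{\C}}x$, and recall from the preliminary remarks that $M(x,\C)=M(x_m,\C)$. Theorem 3.1 then gives $M(x,\C)=\M^{\Omega_{\C}}\cdot x=\M^{\Omega_{\C}}\cdot x_m=\M^{\C}P_{m_{\C}}\cdot x$, and Theorem 3.2 identifies $\M^{\C}=\overline{\L_{\F}}=\overline{\L_{\E}}=\M^{\E}=\M^{\F}$. Substituting the latter into the former yields $M(x,\C)=\overline{\L_{\E}}\cdot x_m=\overline{\L_{\F}}\cdot x_m$ at once. Thus every equality in the theorem except the two in which $\overline{\L_{\E}}$ has been replaced by the un-closed algebras $\L_{\E}$ and $\L_{\F}$ is already a formal consequence of the preceding two theorems; the only thing left to justify is that on $M(x,\C)$ one may pass from $\overline{\L_{\E}}$ to $\L_{\E}$ and $\L_{\F}$.

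Second I would prove that $M(x,\C)$ is separable. Since $x_m\in\h_m(\C)$ is \emph{wPr}, $M(x_m,\C)$ is reducing, and by Theorem 3.2 the algebra $\M^{\C}=\M^{\F}$ is the $W^{*}$-algebra generated by the single unitary $U$, so the polynomials in $U$ and $U^{*}$ are strongly dense in it. Applying such polynomials to $x_m$ and closing shows $M(x_m,\C)=\sp\{U^{n}x_m:n\in\mathbb{Z}\}$, the closed span of a countable set, hence separable; equivalently, under the scalar spectral measure of $x_m$ determined by $\F$ the cyclic subspace is unitarily equivalent to an $L^{2}$ space over a finite measure, which is separable.

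Third, on this separable reducing subspace the functional-calculus fact already invoked in Section 3.2 (the separable case of [13, $\S$ 129], i.e. that for a self-adjoint operator on a separable space the bounded Borel functional calculus exhausts the generated $W^{*}$-algebra) now applies to $A|_{M(x,\C)}$ and $U|_{M(x,\C)}$: there the restrictions of $\L_{\E}$ and $\L_{\F}$ are weakly closed and coincide with the restriction of $\overline{\L_{\E}}=\M^{\E}=\M^{\F}=\overline{\L_{\F}}$. Evaluating these equal operator algebras at $x_m$ gives $\overline{\L_{\E}}\cdot x_m=\L_{\E}\cdot x_m=\L_{\F}\cdot x_m$, which closes the chain; the corresponding statement for $\U$ follows verbatim with $A$ and $\E$ in place of $U$ and $\F$. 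The step I expect to be the real obstacle is this last transfer: the identity $\M^{\E}=\L_{\E}$ can fail on the ambient space $\h_u$ if $\h_u$ is not separable, so the argument hinges on checking that restriction to the reducing subspace $M(x,\C)$ carries $\E,\F$ to the spectral families of $A|_{M(x,\C)},U|_{M(x,\C)}$ and is compatible with the bounded functional calculus, so that separability may legitimately be read off $M(x,\C)$ rather than off $\h_u$.
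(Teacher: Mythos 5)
Your proposal follows essentially the same route as the paper: it derives the chain of equalities from Theorems 3.1 and 3.2, obtains separability of $M(x,\C)$ from the countability of the generating group (the closed span of $\{U^{n}x_{m}\}$), and then invokes the separable-case functional-calculus identity $\L_{\E}=\overline{\L_{\E}}$ on that subspace (the paper cites [13, $\S$ 106] for exactly this step) to remove the closure bars. Your added care about restricting the spectral families to the reducing subspace $M(x,\C)$ is a worthwhile elaboration of a point the paper leaves implicit, but it does not change the argument.
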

\begin{proof}
The group $\C $ is separable and from [Theorem 3.2]
$M(x, \C)=M(x_{m}, \C)=\M^{\Omega_{\C}} \cdot x_{m}=\overline{\mathcal{L}_{\mathcal{E}}} \cdot x_{m}.$
Hence $M(x_{m}, \C)=\M^{\C} \cdot x_{m}$ is separable. From [13, $\S$ 106], $\mathcal{L}_{\mathcal{E}}P_{m_{\C}}=\overline{\mathcal{L}_{\mathcal{E}}}P_{m_{\C}}$ in $\L(\M_{\tau})$ and therefore
$$M(x, \C)=\M^{\Omega} \cdot x_{m}= \overline{\mathcal{L}_{\mathcal{E}}} \cdot x_{m} = \mathcal{L}_{\mathcal{E}}\cdot x_{m}= \mathcal{L}_{\mathcal{F}}\cdot x_{m}.$$
\end{proof}

\section{Structure of $\h_{m}$}
  By [2.2 and Theorem 3.1 ] each $M(x_{\tau}, \C) = \mathcal{M}^{\Omega_{\C}} \cdot x_{\tau}$ is $U, U^{*}$ invariant. Hence if $M(x_{\tau}, \C) \neq \h$ the closed reducing subspace $M(x_{\tau}, \C)^{\perp}$ has a \emph{weakly wandering} \footnote{ There is a sequence $ 0 < k_{0}<k_{1}<... \in \mathbb{Z}^{+}$ for which $<U^{k_{j}} x, U^{k_{l}} x >=0 \mf k_{j}\neq k_{l}$.} vector for $U$ by Krengel's Theorem [5]. Using the construction of [9, 10] an orthonormal set $\{x_{\tau}\}$ can be chosen \emph{weakly wandering} for $U$ such that
  \begin{equation}\label{eqn6}
   \h_{m}(\C) = \sum_\tau M(x_{\tau}, \C) = \sum_\tau \M^{\Omega_{\C}} \cdot x_{\tau}.
 \end{equation}

 Hence the limit cycles for $\D$ and its dynamical system
 \begin{equation}\label{eqn7}
   x(n,x_{0}) = T^{n}x_{0}, ~x(-n,x_{0}) = {T^{*}}^{n}x_{0}~ n\geq 0
 \end{equation}

are defined by $U$ and $\M^{\Omega_{\C}}$ :
\begin{equation}\label{eqn8}
  y(n,x_{0}) = T^{n} V x_{0} =  T^{n} V x_{m} = \sum T^{n} V T_{\tau} x_{\tau}.
\end{equation}
  for $T_{\tau} \in \M^{\Omega_{\C}}$ and for all $ V \in \Omega_{\C}.$ For a semigroup $\s = \T \mo \D$ (4.1) characterizes the flight vectors [1, 2, 9, 10].
  \begin{equation}\label{eqn8}
    x_{0} = \sum T_{\tau} x_{\tau} + x_{w}, T_{\tau} \in \M^{\Omega_{\C}}, x_{w} \in \h_{w} .
  \end{equation}

\section{Entangled Systems and Weak Stability}
The results of [4.0] lead us to ask when do  the limit cycles of $\U$ and the cogenerator group $\C$  approximate each other? We formalize this question:

\begin{definition}$\U~\ma~\C$ are \emph{ entangled } on $\h$ if for all $x \in \h$, $\Omega_{\U}\cdot x \subset M(x, \C)$ and conversely $\Omega_{\C}\cdot x \subset M(x, \U)$. They are \emph{decoupled} if $\h_{m}(\mathcal{S})\cap \h_{m}(\mathcal{U}) = \{0\}$.
\end{definition}
\begin{remark}
Some observations:
\begin{itemize}

   \item 1) Assume $\mathcal{U} \ma \mathcal{C}$ are \emph{entangled}. For all $x \in \h, ~  M(x, \U) = M(x, \C)$ since from [3.1]:
             $$M(x, \C)= \overline{sp~\Omega_{\C}\cdot x} \subset M(x,\U)~\ma~ M(x,\U)= \overline{sp~\Omega_{\U}\cdot x} \subset M(x, \C).$$

    \item 2) It follows from 1) that if $\U ~ \ma ~\C$ are \emph{entangled} they have a common \emph{wPr} subspace $\h_{m} = \h_{m}(\U)=\h_{m}(\C)$ with orthogonal projection $P_{m}$.

    \item 3) Lemma: If $\U \ma \C$ are \emph{ entangled } they have a common \emph{limit-algebra} $\M^{\Omega}$.\\
    Proof: Fix $\varepsilon>0 ~\ma~x \in \h_{m}$ - the common \emph{wPr} subspace of 2). Let $T \in \M^{\Omega_{\U}}$ and use the argument of [ Theorem 3.1]. $\M^{\Omega_{\U}} \cdot x = M(x, \U) = M(x, \C)= \M^{\Omega_{\C}} \cdot x $. For this $x$ and any $\epsilon > 0$, since $M(x, \C)= \overline{sp~\Omega_{\C}\cdot x}$, there exists $\sum a_{k}V_{k} \in sp ~\Omega_{\C} \subset \M^{\Omega_{\C}}$ such that $\|\sum a_{k}V_{k}x - Tx\|< \epsilon$.
    Since $x \in \h_{m}~ \ma~\epsilon > 0$ were arbitrary $T \in (\M^{\Omega_{\C}})^{a} = \M^{\Omega_{\C}} $ (the $W^{*}$ algebra $\M^{\Omega_{\C}}$ is strong-operator closed) and $\M^{\Omega_{\U}} \subset \M^{\Omega_{\C}} $.
       Interchanging $\U \ma \C$ in the above argument yields the common \emph{limit-algebra} $\M^{\Omega} = \M^{\Omega_{\U}}=\M^{\Omega_{\C}}$ with unit $  P_{m}$.

   \item 4) Conversely, suppose $\U \ma \C$ have a common \emph{limit-algebra} $\M^{\Omega}=\M^{\Omega_{\C}}=\M^{\Omega_{\U}}$  with unit $P =  P_{m_{\U}} = P_{m_{\C}}$. Then $\h_{m}(\C)=P_{m_{\C}}\h = P_{m_{\U}}\h = \h_{m}(\U)$. By [Theorem 3.1] for all $x\in \h$,
   $$M(x, \C)=\M^{\Omega_{\C}}x=\M^{\Omega_{\U}}x= M(x, \U)$$
   and therefore $\U \ma \C$ are \emph{entangled}.
\end{itemize}
\end{remark}
Conclusion:

\begin{theorem} For the groups $\U \ma \C$ T.F.A.E
\begin{itemize}
  \item a) $\U \ma \C$ are \emph{entangled},
  \item b) $\U \ma \C$ have a common \emph{limit-algebra} $\M^{\Omega}=\M^{\Omega_{\C}}=\M^{\Omega_{\U}}$ with unit $P_{m}$,
  \item c)$\h_{m}(\mathcal{C})=\h_{m}(\mathcal{U})$.
\end{itemize}
\end {theorem}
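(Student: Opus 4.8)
The plan is to exploit the fact that Remark 5.2 has already supplied most of the implications, so that proving the theorem reduces to closing a single cycle. Observation 3) of that remark establishes a) $\Rightarrow$ b) and observation 4) establishes b) $\Rightarrow$ a), so conditions a) and b) are already equivalent; observations 1) and 2) together give a) $\Rightarrow$ c). What remains is a single implication leading from c) back into the pair a)–b), and I would choose to prove c) $\Rightarrow$ a).

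To carry out c) $\Rightarrow$ a), I would begin from the hypothesis $\h_{m}(\C) = \h_{m}(\U)$ and first note that equality of these closed subspaces forces their orthogonal projections to coincide, $P_{m_{\C}} = P_{m_{\U}}$. The crucial ingredient is then Theorem 3.2, which identifies the two large generating algebras, $\M^{\U} = \M^{\E} = \M^{\F} = \M^{\C}$. With both the projections and the algebras equal, I would substitute into the representation of the limit subspace from Theorem 3.1 a), namely $M(x, \s) = \M^{\s} P_{m_{\s}} \cdot x$, applied once with $\s = \C$ and once with $\s = \U$. Matching the two right-hand sides gives $M(x, \C) = M(x, \U)$ for every $x \in \h$. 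Since $\Omega_{\U}\cdot x \subset M(x, \U)$ and $\Omega_{\C}\cdot x \subset M(x, \C)$ hold in general (each $M(x,\s)$ being the closed span of the corresponding $\Omega_{\s}\cdot x$), this common identity yields $\Omega_{\U}\cdot x \subset M(x, \C)$ and $\Omega_{\C}\cdot x \subset M(x, \U)$, which is precisely the definition of entanglement in Definition 5.1.

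The point where the real content lies — the main obstacle — is the step that converts the seemingly weak hypothesis c) into the strong conclusion that the limit subspaces coincide. Equality of the \emph{wPr} subspaces $\h_{m}$ by itself is not enough to force $M(x, \C) = M(x, \U)$; the argument only goes through because Theorem 3.2 guarantees that the full algebras $\M^{\U}$ and $\M^{\C}$ are literally the same object. Thus the entire weight of the implication c) $\Rightarrow$ a) rests on invoking Theorem 3.2 correctly, after which the conclusion follows by direct substitution into Theorem 3.1 with no further estimates, and the cycle a) $\Rightarrow$ b) $\Rightarrow$ a), a) $\Rightarrow$ c) $\Rightarrow$ a) closes to give the full equivalence.
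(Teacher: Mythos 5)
Your proposal is correct and follows essentially the same route as the paper: the author likewise takes the equivalence of a) and b) and the implication a) $\Rightarrow$ c) as already settled by Remark 5.2, and proves only c) $\Rightarrow$ a) by passing from $\h_{m}(\C)=\h_{m}(\U)$ to $P_{m_{\C}}=P_{m_{\U}}$ and then chaining $M(x,\C)=\M^{\overline{\O_{\C}}}P_{m_{\C}}\cdot x=\overline{\L_{\E}}P_{m_{\C}}\cdot x=\overline{\L_{\F}}P_{m_{\U}}\cdot x=M(x,\U)$ via Theorems 3.1 and 3.2. Your identification of Theorem 3.2 as the load-bearing step is exactly where the paper places the weight as well.
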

\begin{proof} We need only show c) implies a). If $\h_{m}(\mathcal{C})=\h_{m}(\mathcal{U})$ then they have a common orthogonal projection $P_{m_{\C}} = P_{m_{\U}}$. Hence from [Theorem 3.1, 2] for all $x \in \h$, $$M(x, \C)= M(x_{m}, \C)= \M^{\O_{\C}}\cdot x_{m} = \M ^{\overline{\O_{\C}}}P_{m_{\C}}\cdot x = \overline{\L_{\E}}P_{m_{\C}}\cdot x = \overline{\L_{\F}}P_{m_{\U}}\cdot x= \M ^{\overline{\O_{\U}}}P_{m_{\U}}\cdot x = M(x,\U),$$
and hence a) follows.
\end{proof}

\begin{cor}
Since $\U \ma \C$ are separable they are \emph{entangled} if and only if  $$ \M^{\O_{\U}} =\L_{\E}P_{m} = \L_{\F}P_{m} =\M^{\O_{\C}}.$$
\end{cor}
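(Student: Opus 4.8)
The plan is to read this corollary as the separable-space, explicit form of [Theorem 5.3]: that theorem already furnishes the abstract equivalence between being \emph{entangled} and possessing a common limit-algebra $\M^{\O_\U}=\M^{\O_\C}$ with a common unit, so the only genuinely new content is to name that common algebra concretely as $\L_\E P_m = \L_\F P_m$. The separability of $\U \ma \C$ enters in exactly one place, namely to collapse the weak-operator closure and replace $\overline{\L_\E}=\M^\E$ by $\L_\E$ (and likewise for $\F$) once we have multiplied by the projection $P_m$. I would therefore prove the two implications in turn, each time leaning on [Theorem 3.1], [Theorem 3.2] and [Theorem 3.3].

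For the forward implication, assume $\U \ma \C$ are \emph{entangled}. By [Theorem 5.3] they have a common \emph{wPr} subspace, so $P_{m_\U}=P_{m_\C}=:P_m$, and a common limit-algebra $\M^{\O_\U}=\M^{\O_\C}$. First I would use [Theorem 3.1 b)] to strip the limit operators down to the ambient algebra, $\M^{\O_\U}=\M^\U P_{m_\U}=\M^\U P_m$ and $\M^{\O_\C}=\M^\C P_{m_\C}=\M^\C P_m$. Next [Theorem 3.2] identifies $\M^\U=\overline{\L_\E} \ma \M^\C=\overline{\L_\F}$, giving $\M^{\O_\U}=\overline{\L_\E}P_m$ and $\M^{\O_\C}=\overline{\L_\F}P_m$. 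Finally separability, through [Theorem 3.3], collapses the closures to yield $\overline{\L_\E}P_m=\L_\E P_m \ma \overline{\L_\F}P_m=\L_\F P_m$; combined with $\E=\F$ (hence $\L_\E=\L_\F$) and the common-algebra identity $\M^{\O_\U}=\M^{\O_\C}$, all four displayed terms coincide.

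For the converse, suppose the displayed chain $\M^{\O_\U}=\L_\E P_m=\L_\F P_m=\M^{\O_\C}$ holds. Then in particular $\M^{\O_\U}=\M^{\O_\C}$ as $W^*$ subalgebras of $\L(\h_u)$, and equal algebras have equal units; since by [Theorem 3.1] the unit of $\M^{\O_\s}$ is the projection $P_{m_\s}$ onto $\h_m(\s)$, I conclude $P_{m_\U}=P_{m_\C}=P_m$, i.e. $\h_m(\U)=\h_m(\C)$. This is statement c) of [Theorem 5.3] (equivalently, the common-algebra statement b) with unit $P_m$), so $\U \ma \C$ are \emph{entangled}.

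The only step I expect to require real care is the separability reduction $\overline{\L_\E}P_m=\L_\E P_m$ at the level of operator algebras, because $\h_m(\s)$ is in general only an orthogonal sum $\bigoplus_\tau M(x_\tau,\s)$ of the \emph{separable} limit subspaces supplied by a recurrent spanning set, and the sum may be uncountable. On each separable summand [Theorem 3.3] gives the equality together with a representing function $u_\tau \in \L^\infty_\E$ for a fixed $T\in\overline{\L_\E}$; the point to verify is that these $u_\tau$ patch, up to the scalar spectral measures of the $x_\tau$, into a single $u\in\L^\infty_\E$ with $u(A)P_m=TP_m$, which is where [13, \S 106] is invoked. Everything else is a formal rearrangement driven by [Theorem 3.1], [Theorem 3.2] and [Theorem 5.3].
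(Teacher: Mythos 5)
Your proposal is correct and follows exactly the route the paper intends: the corollary carries no explicit proof and is meant to be read off from Theorem 5.3 (entangled $\Leftrightarrow$ common limit-algebra with unit $P_m$) combined with Theorems 3.1--3.2 (which give $\M^{\O_{\s}}=\M^{\s}P_{m_{\s}}$ and identify $\M^{\U},\M^{\C}$ with $\overline{\L_{\E}},\overline{\L_{\F}}$) and the separability collapse $\overline{\L_{\E}}P_{m}=\L_{\E}P_{m}$ of Theorem 3.3. Your converse via uniqueness of the unit of a $W^{*}$-algebra, and your flagged concern about patching the representing functions $u_{\tau}$ across a possibly uncountable recurrent spanning set, are both sound and in fact more careful than anything the paper records.
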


\begin{remark}
  By the $\h_{0}$ splitting theorem of [9 Theorem 2.5] $\h_{0}= \h_{m} \oplus \h_w$. Hence as a result of  [Theorem 5.3] : The weak-stability of a $C_{0}$ contraction semigroup $\T$ is equivalent to that of its cogenerator $T$ ($\h_w(\T) = \h_w(\D)$ ) if and only if their unitary parts $\U ~ \ma ~\C$ have a common limit algebra (are \emph{entangled}). This addresses Open Question 2.23 of T. Eisner [2, 2.23, p176].
\end{remark}

\begin{remark}
  When $\U ~ \ma ~\C$ are \emph{entangled} their limit cycles can be expressed in terms of each other. For example if $\U ~ \ma ~\C$ are \emph{entangled} and $\h_{m}(\C) = \sum_\tau M(x_{\tau}, \C) = \sum_\tau \M^{\Omega_{\C}} \cdot x_{\tau}$ is the expansion of [4.0], then for $x_{0} \in \h,~ V \in \O_{U},$ the limit cycles for (2.1) have the form $$y(t,x_{0}) = T_{t}V x_{0}=\sum T_{t} V T_{\tau} x_{\tau} $$
  for $T_{\tau} \in \M^{\O}$ the common limit algebra.
\end{remark}
\begin{example}
The Limit Algebras of [1.3]\\
a) Consider the semigroup of [1.3] $\T: T_{t} = U_{t} \oplus W_{t}~\mf t\geq 0$ with cogenerator $T = U \oplus W$ on  $\h = L^{2}[0, 1] \oplus  l^{2}(\mathbb{Z}^{+})$.

$ U$ is the unitary operator of [ 9, 10] with defining spectral family $ \F = \{F_\theta\}$  on $L^{2}([0,1])$ with group $\C$. It is the cogenerator of the group $\U =\{U_{t}\}$ of [9, 10]. Using the argument of [9, Theorem 6.2] there exist subsequences $\{2^{m_{k}}\} ~\ma ~\{2^{n_{j}}\}$ with limit operators $V \in \Omega_{\C} ~ \ma ~ W \in \Omega_{\U} $ for which $\o-\lim U^{2^{m_{k}}} = V ~\ma\o-\lim U_{ 2 \pi 2^{n_{j}}} = W~ \ma ~ W=V=I.$

b) The above implies $U = UI = UV \in \Omega_{\C}~\ma~ U_{t} = U_{t}I = U_{t}V \in \Omega_{\U} $. From the construction of [Theorem 3.5] $U_{t} \in \M^{\Omega_{\C}} ~ \ma ~ U \in \M^{\Omega_{\U}}$. These observations imply:
$$\M^{\Omega_{\C}} \subset \M^{\C} \subset \M^{\Omega_{\U}} ~\ma~ \M^{\Omega_{\U}} \subset \M^{\U} \subset \M^{\Omega_{\C}}$$
and hence there is a common limit algebra and by [Theorem. 5.3] $\h_{m}(\C) = \h_{m}(\U)$,i.e. $\U~\ma~\C$ are \emph{entangled}.

$$\mathcal{M}^{\Omega_\C}= \mathcal{M}^{\C}=\overline{\L_{\E}}= \overline{\L_{\F}}=\M^\mathcal{U}=\mathcal{M}^{\Omega_\U}\equiv\M^{\Omega}.$$

c) Since $\h = L^{2}([0,1])$ is separable: $$\mathcal{M}^{\Omega_\C}= \overline{\L_{\E}}= \L_{\E}= \L_{\F}= \overline{\L_{\F}}=\mathcal{M}^{\Omega_\U}\equiv\M^{\Omega}.$$
Moreover for each $f \in L^{2}([0,1]), ~ f = If = Vf =Wf \in \h_{m}(\C)\cap\h_{m}(\U)$,i.e. $\h_{m}(\C)= \h_{m}(\U) = L^{2}([0,1])$.

d) Suppose for $\overline{x} = (f, \overline{z}) \in \h ~ \o- \lim\limits_{t\to\infty} T_{t}\overline{x}=0$. Then each of its components converge weakly to 0. But since $\h_{w}(\U) = \h_{w}(\C) = 0$ from c), $f = 0$ and hence $\h_{w}(\T) = \{0\}\oplus l^{2}_{+} = \h_{w}(\mathcal(D))$ and $\h_{m}(\T) = \h_{m}(\mathcal(D))$, i.e. $\T$ and $\mathcal{D}$ of [1.3] are \emph{entangled} and hence \emph{weakly stable equivalent}. The common limit algebra of $\T: T_{t} = U_{t} \oplus W_{t}$ and cogenerator $T = U \oplus W$ in $\L(\h)$ is  $\M^{\Omega_{\T}}=\M^{\Omega_{\D}}= \L_{\E}\oplus \{0\}$.
\end{example}

\section{Bibliography}
\begin{itemize}

  \item [1] T. Eisner, Stability of Operators and Operator Semigroups, Preprint, Mathematisches Institut, Universitat Tubingen, Tuingen, Germany..
  \item [2] T. Eisner, Stability of Operators and Operator Semigroups, Operator Theory: Advances and Applications 209, 1st Edition. 2010. Buch. vIII, 204 S. Hardcover ISBN 978 3 0346 0194 8.
  \item[3] I. Glicksberg and K. Deleuw, Applications of Almost Periodic Compactifications, Acta. Math., 105 (1961) 63-97.
  \item[4] E. Hille, R. Phillips, Functional Analysis and Semigroups, American Mathematical Society, (1946).
  \item[5] U. Krengel, Weakly wandering vectors and weakly independent partitions, Amer. Math. Soc. Trans. 164 (1972), 199-226.
  \item[6] B. Nagy and C. Foias, Harmonic Analysis of Operators on Hilbert Space, North Holland Publishing Company, Amsterdam, 1970.
  \item[7] B. Nagy and C. Foias, Functional Analysis, Frederick Ungar Publishing Company, New York, 1971.
  \item[8] V.V. Nemitsky and V.V. Stepanov (1960) \emph{Qualitative Theory of Differential Equations}, Princeton University Press, Princeton, New Jersey.
  \item[9] R. E. O'Brien, Semigroup Dynamics for Flight Vectors, Inter. J. Dyn. Sys. Diff. Eq., to appear Fall 2020.
  \item[10] R. E. O'Brien, Almost Weakly Stable Contraction Semigroups are Weakly Poisson Recurrent, Preprint, •	DOI:10.13140/RG.2.1.4850.6725.
  \item[11] R. E. O'Brien, Flight Vectors and Limit Operators-an Example ,Research Gate Preprint, August 2018 DOI: 10.13140/RG.2.2.28268.62084.
  \item[12] R. E. O'Brien, The Limit Algebra And Weak Stability For C 0 Contraction Semigroups And Cogenerators ,Research Gate Preprint, November 2019 DOI: •	10.13140/RG.2.2.33374.79686.
  \item[13] F. Riesz, B. Sz-Nagy, Functional Analysis, Frederick Ungar Publishing Co. New York 1955.
  \item[14] S. Sakai, $C^{*}$ - Algebras and $W^{*}$ - Algebras, Springer Verlag, New York, 1971.
  \item[15] I. Segal, R. Kunze, Integrals and Operators, McGraw-Hill Book Company,  New York, 1968.
  \item[16] K. Yosida, Functional Analysis, Second Edition, Springer Verlag, New York, 1968.

\end{itemize}

\end{document}